\newcommand{\comment}[1]{}
\newtheorem{thm}{Theorem}
\newtheorem{cor}{Corollary}
\newtheorem{lem}{Lemma}
\newtheorem{prop}{Proposition}
\newtheorem*{thm1'}{Theorem 1'}
\newtheorem*{thm2'}{Theorem 2'}
\newtheorem*{thmB'}{Theorem B$^\prime$}
\newtheorem*{thmA'}{Theorem A$^\prime$}
\newtheorem*{propnA'}{Proposition A$^\prime$}
\newtheorem*{propnB'}{Proposition B$^\prime$}
\theoremstyle{remark}
\theoremstyle{definition}
\theoremstyle{remark}
\DeclareMathOperator{\supp}{supp}
\DeclareMathOperator{\lcm}{lcm}
\newcommand{\R}{\mathbb{R}}
\newcommand{\Z}{\mathbb{Z}}
\newcommand{\N}{\mathbb{N}}
\newcommand{\FF}{\mathbb{F}}
\newcommand{\Q}{\mathbb Q}
\newcommand{\C}{\mathbb{C}}
\newcommand{\RR}{\mathcal{R}}
\newcommand{\MM}{\mathcal{M}}
\newcommand{\HH}{\mathcal{H}}
\newcommand{\PP}{\mathcal{P}}
\newcommand{\XX}{\mathcal X}
\newcommand{\EE}{\mathcal E}
\newcommand{\DD}{\mathcal D}
\newcommand{\MA}{\mathcal A}
\newcommand{\MB}{\mathcal B}
\newcommand{\Tr}{\Delta}
\newcommand{\la}{\lambda}
\newcommand{\lm}{\lambda}
\newcommand{\D}{\delta}
\newcommand{\La}{\Lambda}
\newcommand{\De}{\Delta}
\newcommand{\ga}{\gamma}
\newcommand{\Ga}{\Gamma}
\newcommand{\eps}{\varepsilon}
\newcommand{\te}{\theta}
\newcommand{\tnk}{\theta_{d,k}}
\newcommand{\Om}{\Omega}
\newcommand{\whf}{\widehat{f}}
\newcommand{\1}{\mathbf{1}}
\newcommand{\subs}{\subseteq}
\newcommand{\ls}{\lesssim}
\newcommand{\wh}{\widehat}
\newcommand{\be}{\begin{equation}}
\newcommand{\ee}{\end{equation}}
\newcommand{\eq}{\begin{equation}}
\newcommand{\bee}{\begin{equation*}}
\newcommand{\eee}{\end{equation*}}
\newcommand{\diag}{\operatorname{diag}}
\newcommand{\tr}{\operatorname{tr}}
\begin{document}

\title{
 Discrete maximal operators associated to simplices}
\author{Neil Lyall \quad\quad  \'{A}kos Magyar  \quad\quad Alex Newman \quad\quad Peter Woolfitt}
\thanks{The first and second authors were partially supported by grants NSF-DMS 1702411 and NSF-DMS 1600840, respectively.}

\address{Department of Mathematics, The University of Georgia, Athens, GA 30602, USA}
\email{lyall@math.uga.edu}
\email{magyar@math.uga.edu}
\email{alxjames@uga.edu}
\email{pwoolfitt@uga.edu}

\subjclass[2010]{11B30}


\setlength{\parskip}{2pt}

\begin{abstract} 
We prove $\ell^2$ estimates for certain discrete maximal operators associated to simplices. These  operators are generalizations of the discrete spherical maximal operator.
\end{abstract}

\maketitle


\section{Introduction}


An important result in the development of discrete harmonic analysis is the $\ell^p$-boundedness of the so-called discrete spherical maximal function \cite{MSW}. 
For any $\la\in\sqrt{\N}$ we let $S_\lm=\{y\in\Z^d:\ |y|=\la\}$ denote the discrete sphere of radius $\lm$ centered at the origin. For $f:\Z^d\to\R$ we then define the discrete spherical averages 
\bee
\MA_\la f(x)= |S_\lm|^{-1} \sum_{y\in S_\lm} f(x+y).
\eee
 noting that if $d\geq 5$, then
$c_d\la^{d-2}\leq |S_\la| \leq C_d\la^{d-2}$ for some constants $0<c_d<C_d<\infty$, see \cite{V}.
In \cite{MSW} it was shown that for $p>d/(d-2)$ one has the following maximal function  estimate
\bee
\bigl\|\,\sup_{\lm\geq1}|\MA_\lm f|\bigr\|_{\ell^p(\Z^{d})} \leq C_{p,d}\,\|f\|_{\ell^p(\Z^{d})}
\eee
where $\|f\|_{\ell^p(\Z^{d})} = (\sum_x |f(x)|^p)^{1/p}$.

Given a non-degenerate $k$-simplex $\Tr=\{v_0=0,v_1,\ldots,v_k\}\subs\Z^d$ and $\lm\in\sqrt{\N}$, we let 
\[S_{\lm\Delta}:=\{(y_1,\ldots,y_k)\in \Z^{dk}:\ \Tr'=\{0,y_1,\ldots,y_k\}\simeq \la\Tr\}\]
noting that if $d\geq2k+3$ and  $\lm\in\sqrt{\mathbb{N}}$, then
\be\label{K}c_\De\,\la^{dk-k(k+1)}\leq \, |S_{\lm\Delta}|\,\leq C_\De\,\la^{dk-k(k+1)}\ee
for some constants $0<c_\De<C_\De<\infty$, see \cite{K} or \cite{Magy09}.  Recall that for any $1\leq k\leq d$ we refer to a configuration $\Delta=\{0,v_1,\ldots,v_k\}\subseteq\Z^d$ as a non-degenerate $k$-simplex if the vectors $v_1,\dots, v_k$ are linearly independent. 

For $f:\Z^{dk}\to\R$ we then define the discrete linear simplicial averaging operator
\bee
\MB_{\la\Delta} f(x)= |S_{\lm\Delta}|^{-1} \sum_{y\in S_{\lm\Delta}} f(x+y).
\eee
These operators are a direct analogue of the spherical operators, but with the sphere $S_\lm$ replaced by the surface $S_{\lm\Delta}$. 
For functions $f_1,\dots,f_k:\Z^d\to\mathbb{C}$ one could also define  the \emph{multilinear} simplicial averaging operator
\[
\MA_{\la\Delta}(f_1,\ldots,f_k)(x) = |S_{\lm\Delta}|^{-1} \sum_{(y_1,\dots,y_k)\in S_{\lm\Delta}} f_1(x+y_1)\cdots f_k(x+y_k).
\]

Note that for $k=1$ and $v_1=(1,0,\ldots,0)$ we have that $S_{\la\Tr}=S_\la$ and hence $\MB_{\la\Delta} f=\MA_{\la\Delta}f=\MA_{\la}f$. 

\subsection{Main Result}


Our main result is the following.

\begin{thm}\label{NewSimplex}
If $k\geq1$, $d\geq 2k+3$, and $\Delta=\{0,v_1,\dots,v_k\}\subseteq\Z^{d}$ be a non-degenerate $k$-simplex, then
\be\label{newfull}
\bigl\|\,\sup_{\lm\geq1}|\MB_{\lm\Delta}f|\bigr\|_{\ell^2(\Z^{dk})}\leq C_{d,\Delta} \|f\|_{\ell^2(\Z^{dk})}.
\ee
\end{thm}

Theorem \ref{NewSimplex} has the following immediate consequence.

\begin{cor}\label{OldSimplex}
If $k\geq1$, $d\geq 2k+3$, and $\Delta=\{0,v_1,\dots,v_k\}\subseteq\Z^{d}$ be a non-degenerate $k$-simplex, then
\be\label{oldfull}
\bigl\|\,\sup_{\lm\geq1}|\MA_{\lm\Delta}(f_1,\dots,f_k)|\bigr\|_{\ell^2(\Z^{d})}\leq C_{d,\Delta} \|f_1\|_{\ell^2(\Z^{d})}\cdots  \|f_k\|_{\ell^2(\Z^{d})}.
\ee
\end{cor}

\begin{proof}[Proof of Corollary \ref{OldSimplex}]
Given functions $f_1,\dots,f_k:\Z^d\to\mathbb{C}$ we define $f=\otimes_{j=1}^k f_j$ to be a tensor product. It follows that for any element $\tilde{x}$ on the diagonal $\mathbb{D} = \{(x,...,x)\in\Z^{dk}\,:\, x\in\Z^d\}$ we would have that 
\[\MB_{\la\Delta} f(\tilde{x})=\MA_{\la\Delta}(f_1,\ldots,f_k)(x)\]
and hence 
\bee
\bigl\|\,\sup_{\lm}|\MA_{\lm\Delta}(f_1,\dots,f_k)|\bigr\|_{\ell^2(\Z^{d})}
=\bigl\|\,\sup_{\lm}|(\MB_{\lm\Delta}f)1_{\mathbb{D}}|\bigr\|_{\ell^2(\Z^{dk})}
\leq\bigl\|\,\sup_{\lm}|\MB_{\lm\Delta}f|\bigr\|_{\ell^2(\Z^{dk})}.\qedhere
\eee
\end{proof}

The $\ell^p$ mapping properties of the maximal operators corresponding to the multilinear averages $\MA_{\lm\Delta}$ were considered in \cite{AKP} and \cite{CLM}. The $\ell^2\times\cdots\times\ell^2\to\ell^2$ estimate in Corollary \ref{OldSimplex} constitutes the first non-trivial estimates of any type for this maximal operator in dimensions lower than $d=2k+5$ when $k\geq2$.  

To our knowledge, Theorem \ref{NewSimplex} provides the first non-trivial estimates for the maximal operator corresponding to the averages $\MB_{\lm\Delta}$ when $k\geq2$.

\subsection{Key refined estimates}


Recall that for  $f\in \ell^1(\mathbb{Z}^{dk})$ we define its Fourier transform $\widehat{f}: \mathbb{T}^{dk}\to\mathbb{C}$ by
\[\widehat{f}(\xi)=\sum_{x\in\mathbb{Z}^{dk}}f(x)e^{-2\pi i x\cdot\xi}.\]

Following the approach in \cite{LMNW} we will deduce Theorem \ref{NewSimplex} from  refined estimates for our maximal operators at a single dyadic scale, namely Proposition \ref{MainProp} below.  We first need to introduce some notation. 
For any  integer $j\geq0$ we let
\[
q_j=\lcm\{1, 2, \dots,2^j\}
\]
noting that $q_j$ is of the order $e^{2^j}$, and for any non-negative integers $j$ and $l$ that satisfy $2^j\leq l$ , we let
\be\label{22}
\Omega_{j,l}:=\{\xi\in\mathbb{T}^{dk}: \xi \in[-2^{j-l},2^{j-l}]^{dk}+(q_j^{-1}\mathbb{Z})^{dk}\}.
\ee

 \begin{prop}\label{MainProp}
If $k\geq1$, $d\geq 2k+3$, and $\Delta=\{0,v_1,\dots,v_k\}\subseteq\Z^{d}$ be a non-degenerate $k$-simplex, then
 \begin{equation}\label{4*}
     \bigl\|\sup_{2^l\leq\lambda\leq2^{l+1}} |\MB_{\lm\Delta}f|\Bigr\|_{\ell^2(\Z^{dk})}\leq C_{d,\Delta} \,2^{-j/2}j^{-1}\, \|f\|_{\ell^2(\Z^{dk})}
 \end{equation}
whenever $\supp \widehat{f}\subseteq \Omega_{j,l}^c$, where $\Omega_{j,l}^c$ denotes the complement of $\Omega_{j,l}$.
 \end{prop}

It is easy to see that Proposition \ref{MainProp} is equivalent to the following refined ``mollified" estimates in which one obtains gains in $\ell^2$ over suitably large scales when applied to functions whose Fourier transform  is localized away from rational points with small denominators.

\begin{thm}\label{NewMollifiedSimplex}
If $k\geq1$, $d\geq 2k+3$, and $\Delta=\{0,v_1,\dots,v_k\}\subseteq\Z^{d}$ be a non-degenerate $k$-simplex, then for any 
$\eta>0$, and $L\geq q_\eta^4$, we have
\be\label{4}
\Bigl\|\sup_{\lm\geq\eta^{-2} L}|\MB_{\lm\Delta}f|\Bigr\|_{\ell^2(\Z^{dk})}\leq C_{d,\Delta} \frac{\eta}{\log \eta^{-1}}\, \|f\|_{\ell^2(\Z^{dk})}
\ee
whenever $\supp \widehat{f}\subseteq \Omega_{\eta,L}^c$, where \[\Omega_{\eta,L}=\{\xi\in\mathbb{T}^{dk}: \xi\in[-L^{-1},L^{-1}]^{dk}+(q_\eta^{-1}\mathbb{Z})^{dk}\}\] and $q_\eta:=\lcm\{1\leq q\leq \eta^{-2}\}$.
\end{thm} 

Indeed, note that in proving \eqref{4} one may restrict the supremum to $\eta^{-2} L\leq\lm\leq 2\eta^{-2} L$. Choosing $l,j\in\mathbb{N}$ such that $2^l\leq\eta^{-2} L\leq 2^{l+1}$ and $2^j\geq \eta^{-2}$ we have that  $2^{l-j}\leq L$ and hence $\Omega_{j,l} \subseteq \Omega_{\eta,L}$. 
Applying estimate (\ref{4*}) in Proposition \ref{MainProp} with $j$ and $l$ chosen as above implies estimate (\ref{4}) of Theorem \ref{NewMollifiedSimplex}, while applying estimate (\ref{4}) of Theorem \ref{NewMollifiedSimplex} with $L=2^{l-j}$ and $\eta=2^{-j/2}$ immediately implies estimate (\ref{4*}) of Proposition \ref{MainProp}.

Estimate (\ref{4}) in the case $k=1$ was originally established by the first two authors in \cite{LM19} via an adaptation of the transference methods from \cite{MSW}. 

\subsection{Outline of paper}

The deduction of Theorem \ref{NewSimplex} from Proposition \ref{MainProp} follows exactly as in Section 3 of \cite{LMNW}, just with $\R^d$ and $\Z^d$ replaced with $\R^{dk}$ and $\Z^{dk}$, as such we choose to omit these details. In Section \ref{ProofProp} below we reduce the proof of Proposition \ref{MainProp} to two key estimates for theta functions on the Siegel upper half space, these estimates are established in Section \ref{Appendix}. 

\section{Proof of Proposition \ref{MainProp}}\label{ProofProp}

Given any simplex $\Tr=\{v_0=0,v_1,\ldots,v_k\}\subseteq\R^d$, we introduce the associated \emph{inner product matrix} $T=T_\Tr= (t_{ij})_{1\leq i,j\leq k}$ with entries $t_{ij}:= v_i\cdot v_j$, where ``$\cdot$"
stands for the dot product in $\R^d$. Note that $T$ is a positive semi-definite matrix with integer entries and $T$ is positive definite if and only if $\Tr$ is non-degenerate. 

It is easy to see that $\Tr'\simeq \la\Tr$, with $\Tr'=\{y_0=0,y_1,\ldots,y_k\}$, if and only if 
\eq\label{2.4}
y_i\cdot y_j=\la^2 t_{ij}\quad \text{for all}\quad 1\leq i,j\leq k.
\ee
If we let $M\in\Z^{d\times k}$ be a matrix with column vectors $y_1,\ldots,y_k\in\Z^d$, then the system of equations above can be written as the matrix equation
\eq\label{3.1}
M^t M =\la^2 T,
\ee
where $M^t$ is the transpose of the matrix $M$. It therefore follows that
\[
\MB_{\la\Delta}f(x) = |S_{\lm\Delta}|^{-1} \sum_{y\in \Z^{dk}} f(x+y)S_{\la^2 T}(M)
\]
if we use $S_{\la^2 T}(M)$ to denote the indicator function of relation \eqref{3.1}.

Let $I_k=[0,2]^{k(k+1)/2}$ denote the space of symmetric $k\times k$ matrices with entries in the interval $[0,2]$. Using the fact that 
\[\tr (X^t Y) =\tr (Y X^t) =\sum_{i=1}^k \sum_{j=1}^k x_{ij}y_{ij},\]
for any $k\times k$ matrices $X=(x_{ij})$, $Y=(y_{ij})$, one has
\eq\label{3.2}
S_{\la^2 T}(M)= 2^{-k} \int_{I_k} e^{\pi i\,\tr[(M^t M -\la^2 T) X]} \,dX
\ee
where $dX=\prod_{1\leq i\leq j\leq k} dx_{ij}$. Moreover, if $M^t M=\la^2 T$ then 
\[\tr (T^{-1} M^t M) = \tr (M T^{-1} M^t) =\tr (\la^2 I) = k\la^2.\]

Given $l\in\N$ write $\La=2^l$ and $\eps =2^{-2l}$. We have 
\eq\label{3.3}
S_{\la^2T}(M)= 2^{-k} e^{k\eps\la^2}\,
\int_{I_k} e^{-\pi i \la^2 \tr(TX)}\,e^{\pi i\,\tr (M(X+i\eps T^{-1})M^t)} dX.
\ee

Let \[G_{X,\eps}(M) = G_{X,\eps}(y_1,\ldots,y_k)=e^{\pi i\,\tr(M(X+i\eps T^{-1})M^t)}\]
be the Gaussian function, where $y_1,\ldots,y_k \in\Z^d$ are the column vectors of the matrix $M$, and define the corresponding operator
\eq\label{3.4}
B_{X,\eps} f(x) := \sum_{y_1,\ldots,y_k\in\Z^d}
f(x+(y_1,\dots,y_k))\,G_{X,\eps}(y_1,\ldots,y_k).
\ee

It follows that 
\[\MB_{\la\Delta}f(x) = 2^{-k} e^{k\eps\la^2} |S_{\la\Tr}|^{-1}
\int_{I_k} e^{-\pi i \la^2 \tr(TX)}\, B_{X,\eps} f(x)\,dX.\]
Thus for the maximal function 
\[\MM_lf:=\sup_{2^l\leq\lambda\leq2^{l+1}}|\MB_{\lm\Delta}f|\]
we have the pointwise estimate
\eq\label{3.5}
\MM_l f(x)\leq C_{d,\Delta}\, \La^{-k(d-k-1)}\,\int_{I_k} |B_{X,\eps} f(x)|\,dX
\ee
as $\eps=\La^{-2}=2^{-2l}$ and $\La\leq \la\leq 2\La$. Finally, by Minkowski's inequality
\eq\label{3.6}
\|\MM_l f\|_{\ell^2(\Z^{dk})}\leq C_{d,\Delta}\, \La^{-k(d-k-1)} \,\int_{I_k} \|B_{X,\eps} f(x)\|_{\ell^2(\Z^{dk})}\,dX.
\ee
Since 
\[\widehat{B_{X,\eps} f}=\widehat{G_{X,\eps}}\widehat{f}\]
it follows from Plancherel's identity that
\eq\label{3.8}
\|B_{X,\eps} f\|_{\ell^2(\Z^{dk})}\leq\,
 \|\widehat{G_{X,\eps}}\|_\infty\ \|f\|_{\ell^2(\Z^{dk})}
\ee
and hence that the $\ell^2\times\cdots\times \ell^2\to \ell^2$ boundedness of the dyadic maximal operator $\MM_l f$ will follow from the estimate
\eq\label{3.9}
\int_{I_k} \|\widehat{G_{X,\eps}}\|_\infty\,dX\leq C_{d,\Delta}\, \La^{k(d-k-1)}\ee
with $\La=2^l$. For the refined estimate we use the assumption that $\supp\,\whf\subs \Om_{j,l}^c$, that is
$\whf =1_{\Om_{j,l}^c} \whf$, thus in order to prove Theorem 2 it is enough to show that, for $j,l\in\N$ with $2^{j+2}\leq l$, one has 
\eq\label{3.10}
\int_{I_k} \|1_{\Om_{j,l}^c}(\xi_1)\, \widehat{G_{X,\eps}}(\xi)\|_\infty\,dX \leq C_{d,\Delta}\, 2^{-j/2}j^{-1}\,\La^{k(d-k-1)} \ee
with $\La=2^l$.


\section{Estimates for theta functions on the Siegel upper half space}\label{Appendix}


To prove estimates \eqref{3.9} and \eqref{3.10} we will follow the approach given in Section 5 of \cite{Magy09}. For the sake of completeness we recall below some of the basic notions and constructs.
If $M=[m_1,\ldots,m_k]\in\Z^{d\times k}$ and $\XX=[\xi_1,\dots,\xi_k]\in\R^{d\times k}$ are $d\times k$ matrices, then one has that $\tr (M^t \mathcal{X})=m_1\cdot\xi_1+\ldots +m_k\cdot\xi_k$ where $\cdot$ denotes the usual dot product. Thus, the Fourier transform of a function $f(m_1,\ldots,m_k)=f(M)$ may written 
\[\wh{f}(\XX)=\wh{f}(\xi_1,\ldots,\xi_k) = \sum_{M\in \Z^{d\times k}} 
f(M) e^{-2\pi i \tr( M^t\XX)}.\]
This implies that 
\eq\label{4.1} 
\wh{G}_{X,\eps}(\XX) = \sum_{M\in \Z^{d\times k}} e^{\pi i\,\tr[(M(X+i\eps T^{-1})M^t -2M^t \XX]} = \theta_{d,k} (X+i\eps T^{-1},-\XX,0)
\ee
where $\te_{d,k}:\mathbb{H}_k\times\R^{d\times k}\times\R^{d\times k}\to\mathbb{C}$ is the theta-function defined by
\eq\label{4.2}\te_{d,k}(Z,\XX,\mathcal{E}) = \sum_{M\in \Z^{d\times k}} e^{\pi i\,\tr[(M-\EE)Z(M-\EE)^t +2M^t \XX-\EE^t \XX]}
\ee
for $Z=X+iY\in\mathbb{H}_k$, with $\mathbb{H}_k$ being the Siegel upper space, see (5.1)-(5.3) in \cite{Magy09}. 



We partition the range of integration $I_k$ and
estimating the theta function separately on each part by
exploiting its transformation properties. This may be viewed as the extension of the classical Farey arcs decomposition to $k>1$. Recall the integral symplectic group
\eq\label{4.3x}\Ga_k=\left\{\ga=\left(%
\begin{array}{cc}A & B \\C & D \\\end{array}%
\right):\ AB^t=BA^t,\ CD^t=DC^t,\ AD^t-BC^t=E_k, \right\}\ee
which acts on the Siegel upper-half space
$\,\HH_k=\{Z=X+iY:\ X\in \mathcal{M}_k, Y\in\PP_k\}\,$ as a group of analytic automorphisms. The action being
defined by $\ga \langle Z\rangle =(AZ+B)(CZ+D)^{-1}\ $ for
$\ga\in\Ga_k,\,Z\in\HH_k$, see \cite{Magy09} and also \cite{K}. 
Let us recall also the subgroup of
integral modular substitutions
\eq\label{4.4x}\Ga_{k,\infty}=\left\{\ga=\left(%
\begin{array}{cc}A & B \\0 & D \\\end{array}%
\right):\ AB^t=BA^t,\ AD^t =E_k \right\}\ee

Writing $U=A^t$ and $S=AB^t$, it is easy to see that $D=U^{-1}$
and $B=SU^{-1}$, moreover $S$ is symmetric and $U\in GL(k,\Z)$,
i.e. $\,\det(U)=\pm 1$. The action of such
$\ga\in\Ga_{k,\infty}$ on $Z\in\HH_k$ takes the form
\eq\label{4.5x} 
\ga
\langle Z\rangle  = Z[U]+S
\ee
using the notation $Z[U]=U^t ZU$. The general linear
group $GL(k,\Z)$ acts on the space $\PP_k$ of positive $k\times k$
matrices, via the action $Y\to Y[U]$ for $Y\in\PP_k$, and let $\RR_k$
denote the corresponding so-called Minkowski domain, see Definition 1 on page 12 of  \cite{KL}. A matrix $Y=(y_{ij})\in\RR_k$ is called
reduced. We recall that for a reduced matrix $Y$ with $y_{11}\leq y_{22}\leq\cdots\leq
y_{kk}$
\eq\label{4.6x} Y\approx Y_D\ee
where $Y_D=\diag(y_{11},\ldots,y_{kk})$ denotes the diagonal part
of $Y$, and $A\approx B$ means that $A-c_k B>0$ and $B-c_k A>0$ for
some constant $c_k>0$. For a proof of these facts, see Lemma
2 on page 20 in \cite{KL}. A fundamental domain $\DD_k$ for the action of $\Ga_k$
on $\HH_k$, called the Siegel domain,  consists of all matrices
$Z=X+iY$, ($X=(x_{ij})$), satisfying
\eq\label{4.7x} Y\in\RR_k,\ \ \ |x_{ij}|\leq 1/2,\ \ \ \ |\det\,(CZ+D)|\geq 1,\ \ \ \forall\  \ga=\left(%
\begin{array}{cc}A & B \\C & D \\\end{array}%
\right)\in\Ga_k.\ee

The second rows of the matrices $\ga\in\Ga_k$ are parameterized by
the so-called coprime symmetric pairs of integral matrices
$(C,D)$, which means that $CD^t$ is symmetric and the matrices
$GC$ and $GD$ with a matrix $G$ of order $k$ are both integral
only if $G$ is integral, see Lemma 2.1.17 in \cite{A}. It is clear from
definition (5.6) that if $\ga_2=\ga\ga_1$ with second rows
$(C_2,D_2)$ and $(C_1,D_1)$ for some $\ga\in\Ga_{k,\infty}$, then
$(C_2,D_2)=(UC_1,UD_1)$ for some $U\in GL(k,\Z)$. On the other
hand, if both $\ga_1$ and $\ga_2$ have the same second row $(C,D)$
then $\ga_2\ga_1^{-1}\in\Ga_{k,\infty}$. This gives the
parametrization of the group $\Ga_{k,\infty}\backslash\Ga_k$ by
equivalence classes of coprime symmetric pairs $(C,D)$ via the
equivalence relation $(C_2,D_2)\sim(C_1,D_1)$ if
$(C_2,D_2)=(UC_1,UD_1)$ for some $U\in GL(k,\Z)$, see also 
page 54 in \cite{A}. We will use the notation $[\ga]=[C,D]\in
\Ga_{k,\infty}\backslash\Ga_k$.

If one defines the domain:
$\FF_k=\cup_{\ga\in\Ga_{k,\infty}} \ga \DD_k$, then $\
\HH_k=\bigcup_{[\ga]\in \Ga_{k,\infty}\backslash\Ga_k}\ \ga^{-1}
\FF_k$ is a non-overlapping cover of the Siegel upper half-plane.
Correspondingly, for a given matrix $T>0$ of order $k$, define the
Farey arc dissection of level $T$, as the cover
\eq\label{4.8x} I_k=\bigcup_{[\ga]\in \Ga_{k,\infty}\backslash\Ga_k}\ I_T
[\ga],\ \ \ \ I_T [\ga]=\{X\in I_k:\ X+iT^{-1}\in\ga^{-1}
\FF_k\}\ee

We recall the basic estimates (5.14)-(5.16) in \cite{Magy09} whose proofs are based on the transformation property 
\[ |\tnk (Z,\XX,0)|= |\det\,(CZ+D)|^{-\frac{d}{2}}\ |\tnk
(\ga\langle Z\rangle,\, \XX A^t-K_\ga/2,\, \XX C^t-N_\ga/2)|\]
for some matrices $K_\ga, N_\ga\in\Z^{n\times k}$, see Proposition 5.2 in \cite{Magy09}.
Namely, if $(C,D)$ is a coprime symmetric pair, then
for $Z\in I_T[C,D]$ one has
\eq\label{4.9x} |\tnk (Z,\XX,0)|\leq C_{d,k}\, |\det\,(CZ+D)|^{-\frac{d}{2}}\ee
\noindent
uniformly for $\XX\in\MM_k(\R)$.

Next we describe the ``mollified" estimate (5.16) in \cite{Magy09} in slightly different form. For $q\in\N$ and $\tau>0$ define the region
\eq\label{4.10x}
\Om_{q,\tau} =\{\XX\in\R^{d\times k}:\ |\XX-P/2q|\leq \tau\ \ \text{for some}\ \ P\in\Z^{d\times k}\}.\ee

If $[\ga] = [C,D]$ is  a coprime symmetric pair, $q:=|\det(C)|>0$, then for $Z\in I_T[C,D]$
\eq\label{4.11x}|\tnk (Z,\XX,0)|\ls
|\det\,(CZ+D)|^{-\frac{d}{2}}\,\left(e^{-c\min(Y)}+e^{-c\,\tau^2\mu (C^t Y C)}\right)
\ee
\noindent
uniformly for $\XX\in\Om_{q,\tau}^c$. Here $Y=\text{Im}\, \ga\langle Z\rangle$, $\min(Y)=\min_{x\in\Z^d, x\neq 0} |Y x\cdot x|$ and $\mu(Y)=\min_{x\in\R^d,\,|x|=1} |Y x\cdot x|$.


Define, similarly as in (5.20) in \cite{Magy09}
\eq\label{4.3}
J_T[C,D] = \int_{I_T[C,D]} \sup_\XX |\theta_{d,k} (X+i T^{-1},-\XX,0)|\,dX.
\ee
By \eqref{4.9x} we have that 
\eq\label{4.4}
J_T[C,D] \leq C_{d,k}\ J_T^0[C,D],
\ee
where 
\eq J^0_T[C,D]=\int_{X\in I_T[C,D]} |\det (CZ+D)|^{-\frac{d}{2}}\,dX.\ee
If $q:=|\det(C)|>0$, then for $\tau>0$ let
\eq\label{4.6}
J_{T,\tau}[C,D]: =\int_{I_T[C,D]}\ \sup_\XX \1_{\Om_{\tau,q}^c}(\XX)\, |\theta_{d,k} (X+i T^{-1},-\XX,0)|\,dX.
\ee
By estimate \eqref{4.11x} one has 
\eq\label{4.7}
J_{T,\tau}[C,D] \leq C_{d,k}\ J_T^1[C,D] + J_{T,\tau}^2[C,D],
\ee
where 
\eq J^1_T[C,D]=\int_{I_T[C,D]} |\det
(CZ+D)|^{-\frac{d}{2}}\,e^{-c\,min(Y)}\,dX
\ee 
\eq
J^2_{T,\tau}[C,D]=\int_{I_T[C,D]} |\det
(CZ+D)|^{-\frac{d}{2}}\,e^{-c\tau^2\,\mu(C^t YC)}\,dX.
\ee
where $Y=Im\,\ga\langle Z\rangle$ and $\ga\in\Ga_k$ such that
$[\ga]=[C,D]\in \Ga_{k,\infty}\backslash\Ga$. 

Then by inequalities (5.24)-(5.26) given in Propositions 5.3-5.4 in \cite{Magy09}, we have 
\eq\label{4.8}
\sum_{S^t=S} J_T[C,D+CS] \leq C_{d,k}\ \det(T)^{\frac{d-k-1}{2}} |\det (C)|^{-\frac{d}{2}}
and
\ee
\eq\label{4.9}
\sum_{S^t=S} J_{T,\tau}[C,D+CS] \leq C_{d,k}\ \det(T)^{\frac{d-k-1}{2}}
\big(|\det (C)|^{-k} \min(T)^{-\frac{d-2k}{4}} + |\det(C)|^{-\frac{d}{2}} (\tau^2 \mu(T))^{-\frac{d-2k}{4}}\big)
\ee
where the summation is over all symmetric integral matrices $S\in \MM_k(\Z)$. 

Recall that the map $[C,D]\to C^{-1}D$ provides a one-one and onto
correspondence between the classes of coprime symmetric pairs
$[C,D]\in \Ga_{k,\infty}\backslash\Ga_k$, with $\det (C)\neq 0$, and 
symmetric rational matrices $R$ of order $k$, and the pairs $[C,D+CS]$ correspond to the matrices $R+S$ with symmetric $S\in\Z^{k\times k}$. Let us write $\Q(1)^{k\times k}$ for the space of modulo 1 incongruent symmetric rational matrices, where $\Q(1)=\Q/\Z$, $\Q$ being the
set of rational numbers. If $R=C^{-1}D$, for a coprime symmetric pair
$[C,D]$ then will write
\eq\label{4.10}
J_T[R] := \sum_{S^t=S} J_{T}[C,D+CS],
\ee
\eq\label{4.11}
J_{T,\tau}[R] := \sum_{S^t=S} J_{T,\tau}[C,D+CS],
\ee
which is well-defined as it only depends on the equivalence class $[R]\in \Q(1)^{k\times k}$.
Finally write $d(R)=|\det(C)|$ for $R=C^{-1}D$. Then by \eqref{4.2} and \eqref{4.3}, we have with $\eps=\La^{-2}$ that
\eq\label{4.12}
\int_{I_k} \sup_\XX |\theta_{d,k} (X+i \eps T^{-1},-\XX,0)|\,dX 
= \sum_{[C,D], \det(C)\neq 0} J_{\La^2 T}[C,D] + \sum_{[C,D], \det(C)= 0} J_{\La^2 T}[C,D] =: \sum_1 + \sum_2.
\ee

An estimate for the second sum is given in Corollary 5.1 in \cite{Magy09}, namely it is shown that 
\eq\label{4.13}
\sum_2\leq C_{d,k}\ |\La^2 T|^{(k-1)(d-k)/2}\leq C_{d,k}\La^{(d-k)(k-1)}
\ee
where $|T|=(\sum_{ij} t_{ij}^2)^{1/2}$ is the Euclidean norm of the matrix $T$. For the first sum we use estimate \eqref{4.8} for the matrix $\La^2 T$, which implies
\eq\label{4.14}
\sum_1 = \sum_{[R]\in \Q(1)^{k\times k}} J_{\La^2 T}[R]\leq C_{d,k}\ \La^{k(d-k-1)} \sum_{[R]\in \Q(1)^{k\times k}} d(R)^{-d/2}.
\ee

Recall the following estimate, proved in Lemma 1.4.9 in \cite{K};
for $u\geq 1$ and $s>1$ one has 
\eq\label{4.15} u^{-s} \sum_{1\leq d(R)\leq u}
d(R)^{-k}+ \sum_{d(R)\geq u} d(R)^{-k-s}\leq C
(2+\frac{1}{s-1})\,u^{1-s}
\ee 
where the summation is taken over $[R]\in \Q(1)^{k\times k}$. In particular 
$\sum_R d(R)^{-d/2}\ls 1$ in dimensions $d>2k+2$, thus estimate \eqref{3.9} follows from \eqref{4.1}, \eqref{4.12} and estimates \eqref{4.13}-\eqref{4.14}. 
 
For the mollified estimate \eqref{3.10}, we set $\tau=2^{j-l}$ besides $\La=2^l$ and $\eps=2^{-2l}$. Again, we note that if $q = |\det (C)|>0$ and if $q\mid q_j$ i.e. if $q$ divides $q_j$ then $\xi_1\in \Om_{j,l}^c$ implies that $\XX\in \Om_{\tau,q}$ for $\XX=(\xi_1,\ldots,\xi_d)$, for the sets $\Om_{j,l}$ and $\Om_{\tau,q}$ defined in \eqref{22} and \eqref{4.10x}. Using this observation, we have  
\eq\label{4.16}
\int_{I_k} \sup_\XX \1_{\Om_{j,k}^c}(\xi_1)|\theta_{d,k} (X+i \eps T^{-1},-\XX,0)|\,dX\ 
\ls\ \sum_{d(R)\mid q_j} J_{\La^2 T,\tau}[R] + \sum_{d(R)\nmid q_j} J_{\La^2 T}[R] +\sum_2.
\ee
In dimensions $d\geq 2k+3$, using \eqref{4.9} and \eqref{4.15}, the first sum on the right side of \eqref{4.16} is crudely estimated by  
\begin{align} \label{4.17}
\sum_{d(R)\mid q_j} J_{\La^2 T,\tau}[R] &\ \ls\ \La^{k(d-k-1)} \sum_{1\leq d(R)\leq q_j} \big( d(R)^{-k} \La^{-\frac{d-2k}{2}} + d(R)^{-\frac{d}{2}} (\tau\La)^{-\frac{d-2k}{2}}\big)\\
&\ls\  \La^{k(d-k-1)} \big(q_j 2^{-\frac{3l}{2}} + 2^{-\frac{3j}{2}}\big)\ \ls\ \La^{k(d-k-1)} 2^{-\frac{3j}{2}}\nonumber.
\end{align}

Indeed, $q_j=\lcm\{1\leq q\leq 2^j\}\approx e^{2^j}\leq 2^l$ as $2^{j+2}\leq l$ by our assumptions. To estimate the second term on the right side of \eqref{4.16}, we need the following.

\begin{lem}\label{L1} Let $j\in \N$ and $s>1$. Then 
\eq\label{4.18}
\sum_{d(R)\nmid q_j} d(R)^{-k-s}\leq C\, 2^{j(1-s)}\,j^{-1}
\ee
where the constant $C$ may depend on $d,k$ and $s$.
\end{lem}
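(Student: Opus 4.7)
The plan is to exploit the multiplicative structure of the sum via an Euler product decomposition. Every class $[R] \in \mathbb{Q}(1)^{k \times k}$ decomposes canonically into $p$-primary components $[R_p]$, with $d(R) = \prod_p d(R_p)$ and each $d(R_p)$ a power of $p$, so that
\begin{equation*}
\sum_{[R]} d(R)^{-k-s} = \prod_p \zeta_p(k+s), \qquad \zeta_p(k+s) := \sum_{[R_p]} d(R_p)^{-k-s},
\end{equation*}
with absolute convergence for $s>1$ by \eqref{4.15}. Let $Q_p := p^{v_p(q_j)}$ denote the largest power of $p$ not exceeding $2^j$. Then $d(R) \nmid q_j$ is equivalent to $d(R_p) > Q_p$ for some prime $p$, in which case $d(R_p) \geq pQ_p > 2^j$. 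A union bound together with the Euler factorization gives
\begin{equation*}
\sum_{d(R) \nmid q_j} d(R)^{-k-s} \leq \sum_p \Bigl(\sum_{d(R_p) > Q_p} d(R_p)^{-k-s}\Bigr) \prod_{p' \neq p} \zeta_{p'}(k+s) \ll \sum_p \sum_{d(R_p) > Q_p} d(R_p)^{-k-s},
\end{equation*}
since each local factor $\zeta_{p'}(k+s) \geq 1$ and the full product converges.

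The second ingredient is a uniform local tail bound: for every prime $p$ and every $u \geq 1$,
\begin{equation*}
\sum_{d(R_p) \geq u} d(R_p)^{-k-s} \leq C_k\, u^{-s}.
\end{equation*}
This rests on the structural fact that $f_p(p^a) := \#\{[R_p] : d(R_p) = p^a\} \leq C_k\, p^{ak}$ with $C_k$ independent of $p$, a bound implicit in the local parametrization of $p$-primary coprime symmetric pairs underlying the proof of \eqref{4.15}. Summing the resulting geometric series gives $\sum_{a \geq a_0} f_p(p^a) p^{-a(k+s)} \leq C_k(1-p^{-s})^{-1} p^{-a_0 s} \leq 2 C_k p^{-a_0 s}$ for $s \geq 1$.

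Applying this with $u = pQ_p$ (the smallest value of $d(R_p)$ that exceeds $Q_p$) reduces matters to bounding $\sum_p (pQ_p)^{-s}$. Splitting at $2^j$: for $p > 2^j$ we have $Q_p = 1$, and partial summation against $\pi(x) \ll x/\log x$ gives
\begin{equation*}
\sum_{p > 2^j} p^{-s} \ll \int_{2^j}^\infty \frac{dx}{x^s \log x} \ll \frac{2^{j(1-s)}}{(s-1)\, j};
\end{equation*}
for $p \leq 2^j$, the inequality $pQ_p > 2^j$ forces $(pQ_p)^{-s} < 2^{-js}$, and $\pi(2^j) \ll 2^j/j$ yields $\sum_{p \leq 2^j}(pQ_p)^{-s} \ll 2^{j(1-s)}/j$. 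Combining the two ranges produces the desired $O(2^{j(1-s)} j^{-1})$ bound.

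The principal difficulty is verifying the uniform local count $f_p(p^a) \leq C_k p^{ak}$ with a constant that does not degrade as $p \to \infty$; this is morally equivalent to the $p$-uniform bound $\zeta_p(k+s) - 1 \ll p^{-s}$ needed for convergence of the Euler product at $s = 1$, and requires the classification of $p$-primary symmetric rational classes by coprime symmetric pairs. All remaining steps are routine prime-counting estimates and partial summation. One cannot avoid the multiplicative decomposition here: a direct appeal to the global tail bound from \eqref{4.15} applied naively to the set $\{d(R) > 2^j\}$ produces only $O(2^{j(1-s)})$ and misses the crucial factor $j^{-1}$.
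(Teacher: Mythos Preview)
Your argument is correct and follows essentially the same strategy as the paper: isolate a prime power $p^r>2^j$ that witnesses $d(R)\nmid q_j$, factor the sum across that prime, and then sum over primes in the two ranges $p>2^j$ and $p\le 2^j$ using $\pi(x)\ll x/\log x$. The final arithmetic is identical.

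The execution differs in one technical respect. The paper does \emph{not} invoke an Euler product for $\Psi(s)=\sum_{[R]}d(R)^{-k-s}$ directly. Instead it quotes from Kitaoka the coefficient-wise majorization $a_k(n)\le b_K(n)$, where $\sum b_K(n)n^{-s}=\zeta(s+1)^K\zeta(s)$ with $K=2^k+k-3$; multiplicativity of $b_K$ is then automatic, and the uniform local bound $b_K(p^r)\lesssim 1$ follows from the explicit formula $b_K(n)=\sum_{m\mid n}d_K(m)/m$. Your route instead asserts the $p$-primary factorization $[R]\leftrightarrow([R_p])_p$ with $d(R)=\prod_p d(R_p)$ and the local count $\#\{[R_p]:d(R_p)=p^a\}\le C_k\,p^{ak}$, both of which you leave to the reference. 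These facts are true (and indeed underlie Kitaoka's proof of \eqref{4.15} and \eqref{4.20}), but note that neither is an immediate formal consequence of the \emph{statement} of \eqref{4.15}; they come from the finer structure in Kitaoka's Lemma~1.4.9. The paper's use of the explicit majorant sidesteps having to verify multiplicativity of $a_k$ itself, at the cost of introducing the auxiliary series $\zeta(s+1)^K\zeta(s)$; your approach is conceptually a bit more direct once the Euler product is granted.
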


\begin{proof} Let 
\eq\label{4.19}
\Psi(s):= \sum_{[R]\in \Q(1)^{k\times k}} d(R)^{-k-s} = \sum_{n\geq 1} a_k(n) n^{-s},
\ee
\noindent
with $a_k(n)= \sum_{d(R)=n} d(R)^{-k}$. For two Dirichlet series $\Psi(s)=\sum_{n\geq 1} a(n) n^{-s}$ and $\Phi(s)=\sum_{n\geq 1} b(n) n^{-s}$ we will write $\Psi(s)\preceq \Phi(s)$ if $|a(n)|\leq b(n)$ for all $n\geq 1$. 

It is proved in \cite{K} that 
\eq\label{4.20}
\Psi(s) \preceq \zeta(s+1)^K \zeta(s) =: \sum_{n\geq 1} b_K(n) n^{-s},
\ee
with $K=2^k+k-3$, see $(34)$ in Lemma 1.4.9. Clearly the coefficients of the Dirichlet series $\zeta(s+1)^K \zeta(s)$ are multiplicative i.e. $b_K(n m)=b_K(n) b_K(m)$ if $(n,m)=1$, moreover are easy to show that, 
\eq\label{4.21}
b_K(n)= \sum_{m\mid n} \frac{d_K(m)}{m},
\ee
where $d_K(m)=|\{m_1,\ldots,m_k\in\N: m_1 m_2\cdots m_K=m\}|$. 
Since $q_j=l.c.m.\{1\leq q\leq 2^j\}$, if $n\nmid q_j$ the either there is a prime $p>2^j$ such that $p\mid n$ or there is a prime $p< 2^j$ such that $p^{\ga_p}>2^j$
but $p^{\ga_p}\mid n$. Accordingly, we have the estimate
\eq\label{4.22}
\sum_{d(R)\nmid q_j} d(R)^{-k-s} = \sum_{n\nmid q_j} a_k(n) n^{-s} 
\leq  \sum_{p>2^j} \sum_{n\geq 1} b_K(pn) p^{-s} n^{-s} + \sum_{p<2^j} \sum_{n\geq 1} b_K(p^{\ga_p} n) p^{-\ga_p s} n^{-s}.
\ee
Writing $n=p^r m$, the first sum on the right side of \eqref{4.22} is estimated by
\eq\label{4.23}
\sum_{p>2^j} \sum_{n\geq 1} b_K(pn) p^{-s} n^{-s}= \sum_{p>2^j} \sum_{r=1}^\infty \sum_{m\geq 1, p\nmid m} b_K(p^r) b_K(m) p^{-rs} m^{-s},
\ee 
using the fact that $b_K(p^r m)=b_K(p^r) b_k(m)$. By \eqref{4.21}, we have 
\eq\label{4.24}
b_K(p^r) = 1+\sum_{s=1}^r \frac{d_K(p^s)}{p^s}\leq 1+\sum_{s=1}^\infty \frac{(s+1)^K}{2^s} \ls 1,
\ee 
uniformly in $r\geq 1$.
Thus, for $s>1$,
\eq\label{4.25}
\sum_{p>2^j}\sum_{r=1}^\infty \sum_{m\geq 1, p\nmid m} b_K(p^r) b_K(m) p^{-rs} m^{-s}\ \ls\ \sum_{p>2^j} p^{-s} \ \ls\ 2^{j(1-s)} j^{-1},
\ee
using the fact that the number of primes $2^J\leq p<2^{J+1}$ is bounded by $2^J\,J^{-1}$ for all $J\geq j$.

The second term on the right side of \eqref{4.22} is estimated similarly, except that here we use the fact that $p^{\ga_p}>2^j$ for $p<2^j$. We have 
\begin{align}\label{4.26}
    \sum_{p<2^j} \sum_{n\geq 1} b_K(p^{\ga_p} n) p^{-\ga_p s} n^{-s} 
&= \sum_{p<2^j}\sum_{r=\ga_p}^\infty \sum_{m\geq 1, p\nmid m} b_K(p^r) b_K(m) p^{-rs} m^{-s}\\
& \ls\ \sum_{p<2^j}\sum_{r=\ga_p}^\infty p^{-rs}\ \ls\ \sum_{p<2^j} p^{-\ga_p s}\ \ls\ 2^{j(1-s)} j^{-1},\nonumber
\end{align}
as the number of primes $p<2^j$ is bounded by $2^j j^{-1}$. Estimate \eqref{4.18} follows immediately from \eqref{4.25}-\eqref{4.26}.
\end{proof}

In dimensions $d>2k+2$, Lemma \ref{L1} with $s=d/2-k\geq 3/2$ implies that
\eq\label{4.27}
\sum_{d(R)\nmid q_j} J_{\La^2 T}[R]\ \ls\ \La^{k(n-k-1)} d(R)^{-d/2}\ \ls\ \La^{k(n-k-1)} 2^{-j/2} j^{-1},
\ee
with $\La=2^l$. Finally, by \eqref{4.13} \eqref{4.16}-\eqref{4.17} and \eqref{4.27} one obtains, in dimensions $d>2k+2$
\eq\label{4.28}
\int_{I_k} \sup_\XX \1_{\Om_{j,k}^c}(\xi_1)|\theta_{d,k} (X+i \eps T^{-1},-\XX,0)|\,dX\ 
\ls \La^{k(d-k-1)} \big(2^{-j/2} j^{-1} + 2^{-3j/2} + 2^{-3l}\big)\ \ls\ \La^{k(d-k-1)} 2^{-j/2} j^{-1}.
\ee
\medskip

Estimate \eqref{3.10} follows immediately from \eqref{4.1} and \eqref{4.28}.

\comment{

\bigskip

\section{We could add this...}

We could add another 3-4 pages to establish the following weak combinatorial consequence:

\begin{thm}\label{thm1}
\label{Pinned}
Let $k\geq1$, $A\subseteq\Z^d$ with $d\geq 2k+3$, and $\Delta=\{0,v_1,\dots,v_k\}\subseteq\Z^{d}$ be a non-degenerate $k$-simplex.
If $\D^*(A)>0$, 
there exists an integer $q=q(\D^*(A))$ and  $\lm_0=\lm_0(A, \Delta)$ such that  for any $\lm_1\geq \lm_0$ there exists fixed points $x_1,\dots,x_k\in A$ such that for all $\lm\in[\lm_0,\lm_1]\cap\sqrt{\N}$  one has $x_1+y_1,\dots,x_k+y_k\subseteq  A$ for some $\{0,y_1,\dots,y_k\}\simeq\lm q\Delta$.
\end{thm}

\begin{cor}
\label{Pinned}
Let $k\geq1$, $A\subseteq\Z^d$ with $d\geq 2k+3$, and $\Delta=\{0,v_1,\dots,v_k\}\subseteq\Z^{d}$ be a non-degenerate $k$-simplex.
If $\D^*(A)>0$, 
there exists an integer $q=q(\D^*(A))$ and  $\lm_0=\lm_0(A, \Delta)$ such that  for any $\lm_1\geq \lm_0$ there exists a fixed $x\in \underbrace{A\times\cdots\times A}_{\text{$k$ copies}}$ such that for all $\lm\in[\lm_0,\lm_1]\cap\sqrt{\N}$  one has $x+\Delta'\subseteq  \underbrace{A\times\cdots\times A}_{\text{$k$ copies}}$
for some $\Delta'=\{0,y_1,\dots,y_k\}\simeq\lm q\Delta$.
\end{cor}

** I don't think it is worth it. What don you think?
}



\begin{thebibliography}{19}

\bibitem{A}
{\sc A. N. Andrianov}, {\em Quadratic Forms and Hecke Operators}, Grundlehren der mathematischen Wissenschaften, Springer-Verlag (1987)

\bibitem{AKP}
{\sc T. C. Anderson, A. V. Kumchev, and E. A. Palsson}, {\em Discrete maximal operators over surfaces of higher codimension}, Matematica 1 (2022), no. 2, 442--479
\comment{
\bibitem{B1}
{\sc J. Bourgain}, {\em On the maximal ergodic theorem for certain subsets of the integers}, Israel J. Math. 61
(1988), no. 1, 39-72.

\bibitem{B2}
{\sc J. Bourgain}, {\em On the pointwise ergodic theorem on $L^p$ for arithmetic sets}, Israel J. Math. 61 (1988), 73-84.

\bibitem{B3}
{\sc J. Bourgain}, {\em Pointwise ergodic theorems for arithmetic sets. With an appendix by the author, Harry Furstenberg, Yitzhak Katznelson and Donald S. Ornstein}, Publ. Math.-Paris 69 (1989), no. 1, pp. 5-45.

\bibitem{B4}
{\sc J. Bourgain}, {\em Eigenfunction bounds for the Laplacian on the $n$-torus},
Internat. Math. Res. Notices (1993), no. 3, 61-66.
}



\bibitem{CLM}
{\sc B. Cook, N. Lyall, \'A. Magyar}, {\em Multilinear maximal operators associated to simplices}, J. London Math. Soc. 104.4 (2021): 1491-1514.



\bibitem{K}
{\sc Y. Kitaoka}, {\em Siegel modular forms and representation by quadratic forms} Lectures on Mathematics and Physics, Tata Institute of Fundamental Research, Springer-Verlag, (1986).

\bibitem{KL}
{\sc H. Klingen}, {\em Introductory lectures on Siegel modular forms} Cambridge Studies of Advanced Mathematics 20, Cambridge Univ. Press, (1990).

\bibitem{KLO}
{\sc H. D. Kloosterman}, {\em Asymptotische formeln fur die fourierkoeffizienten ganzer modulformen}, Abhandlungen aus dem Mathematischen Seminar der Universitat Hamburg. Vol. 5. No. 1. Springer-Verlag (1927)


\bibitem{LM19}
{\sc N. Lyall and \'A. Magyar}, {\em Distances and trees in dense subsets of $\Z^d$} Israel J. Math. 240.2 (2020): 769-790.


         
\bibitem{LMNW}
{\sc N. Lyall,  \'A. Magyar, A. Newman, P. Woolfitt}, {\em The discrete spherical maximal function: A new proof of $\ell^2$-boundedness}, Proc. Amer. Math. Soc. 149.12 (2021): 5305-5312
\bibitem{MSW}
{\sc \'A. Magyar, E. M. Stein, and S. Wainger}, {\em Discrete analogues in harmonic analysis: Spherical averages},
Ann. Math. (2) 155 (2002), no. 1, 189-208.


\bibitem{Magy09}
{\sc \'A. Magyar},
{\em k-point configurations in sets of positive density of $Z^n$}, Duke Math. J., v 146/1, (2009) pp. 1-34.




\bibitem{R}
{\sc S. Raghavan}, {\em Modular forms of degree n and representation by quadratic forms}, Ann. Math. (2) 70 (1959), no. 3, 446-477.

\bibitem{S}
{\sc C. L. Siegel}, {\em On the theory of indefinite quadratic forms}, Ann. of Math. (2) 45 (1944), 577-622. 


\bibitem{V}
{\sc R. C. Vaughan}, {\em The Hardy-Littlewood Method}, Second ed., Cambridge University Press, Cambridge,
1997.



\end{thebibliography}
\end{document}